\newtheorem{prop}{Proposition}[section]
\newtheorem*{rmk}{Remark}
\newtheorem{ex}[prop]{Example}
\newtheorem{thm}[prop]{Theorem}
\newtheorem{cor}[prop]{Corollary}
\newtheorem{defn}[prop]{Definition}
\newcommand{\rarrow}{\rightarrow}
\newcommand{\ep}{\epsilon}
\newcommand{\om}{\omega}
\newcommand{\ZZ}{\mathbb{Z}}
\newcommand{\RR}{\mathbb{R}}
\newcommand{\id}{\text{id }}
\newcommand{\ol}{\overline}
\newcommand{\cl}{\overset{cl}}
\newcommand{\es}{\emptyset}
\newcommand{\konf}{\mathfrak{K}}
\newcommand{\Int}{\text{int}}
\title{The Semicontinuity of Attractors for Closed Relations on Compact Hausdorff Spaces}
\author{Shannon Negaard-Paper, PhD}
\date{October 2019}
\begin{document}

\maketitle

\section*{Abstract}
We show that attractors are semicontinuous for closed relations on compact Hausdorff spaces. Semicontinuity is what guarantees that small changes to a system do not result in massive growth of certain features, notably attractors. That is, there is a certain preservation of structure. When it comes to flows, semiflows, and maps, it is well established that attractors are semicontinuous. In \ref{bk:McGeheeAttractors}, relations were established as a way to generalize maps, and a formal definition of attractors was established. Relations (in the dynamical systems sense) represent discrete time systems, which may lack uniqueness (or existence) in forward time. 

\section{Introduction}

We start in a well-known setting - maps, and save discussion of relations for later. Let $f$ be a map over a topological space $X$. Then, an attractor $A$ is a compact invariant set, which has some neighborhood $U$ where 
	\[ A = \om(U;f) = \bigcap\limits_{n\ge 0}\ol{ \bigcup_{k\ge n}   f^n(U)   } .\]
Attractors are a fundamental type of invariant set, and they play a large role in understanding the structure of any dynamical system. When analyzing the structure of a system, we usually look first at ultimite behavior, and therefore find attractors. These help us define repeller duals, connecting orbits, etc. \ref{bk:Conley},\ref{bk:Mischaikow}.

We therefore wish to know when they persist; that is, say we have a dynamical system with a an attractor of interest, then how much can we change a dynamical system and still have an attractor in the same region of our space? Even more precisely, when do we have semicontinuity of attractors? Semicontinuity of attractors guarantees the preservation of a fundamental structural element: if a system has an attractor, then there are nearby systems with their own nearby attractors.
Say a system $f$ over $X$ has an attractor $A$. Semicontinuity means that given some goal neighborhood $U$ of $A$, we can find a bounding neighborhood of $f$, such that any system $g$ in that neighborhood (a system that is ``close to" $f$) also possesses an attractor $A'$ in $U$ (so, close to $A$). 
This puts a limit on the {\it growth} of attractors, as we slowly change a system. What we are not guaranteed is a limit on the shrinkage of attractors, as Example \ref{ex:ShrinkingAttMap} demonstrates.

\begin{ex}\label{ex:ShrinkingAttMap}
Consider the family of (continuous) maps $\frak{F}_\alpha=\{f(x)=-x^3 + 3x^2 + x - \alpha\}$. For the system where $\alpha = 4$, there are two fixed points: $x=-1,2$. In fact, $A=[-1,2]$ is an attractor. See the phase space diagram in Figure \ref{fig:ShrinkingAttMap}, in which the attractors are orange. If we decrease $\alpha$ by a little, the attractor shifts slightly (until we hit $\alpha=0$). We can put a neighborhood $U$ (the green box in the figure) around $A$, and this will dictate how much we can move $\alpha$ in either direction. To the left, the attractor inside $U$ will still be a closed interval, with the bounds shifting slowly. If we let $\alpha>4$, however, the largest invariant set inside $U$ is a single point, close to $x=-1$.

\FloatBarrier
\begin{figure}
\centering
\includegraphics[width=.3\textwidth]{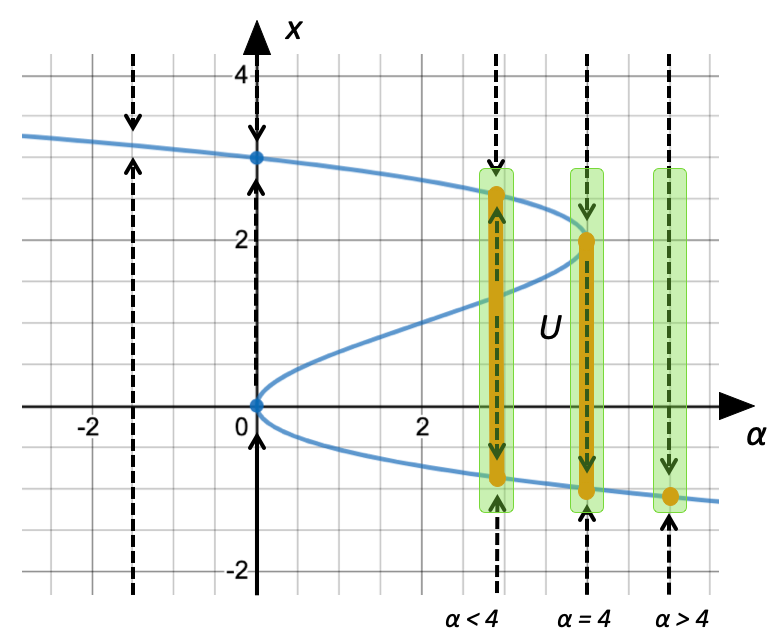}\label{fig:ShrinkingAttMap}
\caption{Phase space diagram for $f(x)=-x^3 + 3x^2 + x - \alpha$ in Example \ref{ex:ShrinkingAttMap}.}
\end{figure}
\end{ex}

An important breakthrough, in answering the question of semicontinuity of attractors for flows and maps, arises from an idea of C. Conley \ref{bk:Conley},\ref{bk:Mischaikow}. He shifted the focus onto {\it attractor blocks} and established associations between attractor blocks and the attractors inside them. We will use the same tool: attractor blocks, but in a setting where systems might lack uniqueness in forward time.



In \ref{bk:McGeheeAttractors}, R. McGehee established the use of relations for generalizing discrete dynamical systems, lacking uniqueness in either forward or backward time. 
 Relations represent part of a natural progression - from invertible maps, to all maps (unique images in forward, but not necessarily backward, time), to relations.
In \ref{bk:McGeheeAttractors}, a great number of foundational ideas and terminology were established. We review the ones we need in Sections \ref{sec:Defns} and \ref{sec:Attractors}, with the latter focusing on definitions and theorems related to attractors. 

Relations have proven a fruitful tool (\ref{bk:McGeheeSander},\ref{bk:McGeheeWiandt},\ref{bk:Me}), but one thing that had not previously been addressed is the semicontinuity of attractors, occurring in systems defined by closed relations. This brings us to the main result, which will be proven in Section \ref{sec:Semicontinuity}.
\begin{quotation}
{\bf Theorem \ref{thm:AttSemiCont}}
{\it  Let $X$ be a compact Hausdorff space, and let $f\cl{\subset} X\times X$ be a closed relation over $X$. Suppose also that $A\subset X$ is a nonempty attractor for $f$. Given any open neighborhood $U$ of $A$, there is an open neighborhood $V$ of $f$ such that any other closed relation $g\cl{\subset} V$ has an attractor $A'\subset U$. }
 \end{quotation}
 That is, attractors for closed relations on compact Hausdorff spaces are semicontinuous.





\section{Review definitions \& theorems}\label{sec:Defns}


We begin with a review of the definition of a relation. We'll expand on the motivation shortly.

\begin{defn}
A {\it relation} $f$ on a space $X$ is a subset $f\subset X\times X$.
\end{defn}

The graph of a map $F:X\rarrow X$ is a relation $f\subset X\times X$, and relations allow for us to include situations, which lack uniqueness in ``forward time." Simple examples lack uniqueness. For instance, let $X=\RR$ be our space and let $y^2=x$. Then $y=\pm\sqrt{x}$, which is not a function of $x$, but its graph $f=\{(x,y): y^2=x\}$ in $\RR\times\RR$ is a relation. Thus, relations are the natural objects to consider for the purpose of generalizing maps. More arguments about their usefulness can be found in \ref{bk:McGeheeAttractors}. We focus on closed relations because the graph of any continuous map $F:X\overset{cont}{\rarrow} X$ is a closed relation $f\overset{cl}{\subset} X\times X$. Thus, closed relations serve as a generalization of continuous maps.

We know how to find the image under a map, as well as how to iterate a map, so as to move forward in discrete time. We review how these concepts transfer to relations. 

\begin{defn}
Let $f$ be a relation
on $X$, and let $S \subset X$. The {\it image} of S under $f$ is the set
\[ f(S)\equiv \{y\in X: \hbox{ there is some }x\in S\hbox{ satisfying }(x,y)\in f\}.\]
\end{defn}

Specifically, we will frequently care about finding the image of a single point. We use the abuse of notation
		\[ f(x) = f(\{x\}) = \{ y\in X : (x,y)\in f\}.\]

\begin{defn}If $f$ is a relation over $X$, then for $n\in\ZZ_{\ge0}$ $f^n$ is also a relation over $X$ defined by
	\[f^0=\id_X=\left\{(x,x):x\in X\right\},\]
	\[f^n=\left\{(x,z): z=f(f^{n-1}(x))\right\} \hbox{ if }n>0.\]
\end{defn}

Because $f^n$ is also a relation, we already know how to define the images of forward time iterations. Also, if $f$ is closed, then so is $f^n$ (for $n\in\ZZ_{\ge0}$). This is a quick result: the identity is closed, and the rest is a result of Theorem 2.2 from \ref{bk:McGeheeAttractors}, which states that if $f$ and $g$ are closed relations over a space $X$, then so is $f\circ g$.

For reasons explored in further depth in \ref{bk:McGeheeAttractors}, we only consider composition for $n\ge0$, and in order to move in backward time we consider the transpose of relations.

\begin{defn}
Let $f\subset X\times X$ be a relation on $X$. Then, {\it f transpose} is defined as
		\[f^* = \left\{(x,y):(y,x)\in f\right\}.\]
\end{defn}

\begin{ex}
Consider again the example where $y^2 = x$ ($x,y\in X=\RR$). We'll build it from the transpose of another relation. Let $y=G(x)=x^2$, a map whose graph is \[g=\{(x,y):y=x^2\}\subset \RR\times\RR.\] Notice that $g$ is already a relation (and the graph of a function). We simply take its transpose $f=g^*=\left\{(x,y):x=y^2\right\}.$ Moving ``backward" in time would involve iterating $f$. We look at the images under $f$:
		\begin{align*}
				f(x)=&\emptyset \hbox{ if } x<0, \\
				f(0)=&\{0\}, \hbox{ and }\\
				f(x)=&\{\pm\sqrt{x}\}\hbox{ if }x>0.
		\end{align*} 
Relations allow for non-unique images and images that are empty.
\end{ex}

The transpose is not a true inverse. In general, one cannot take a relation $f$ (over $X$) and its transpose $f^*$ and combine them to get the identity relation. However, if $f$ is the graph of an invertible function $F:X\rarrow X$, then $f^*$ is the graph of $F^{-1}$. This is not our current area of exploration. For further details, once again see $\ref{bk:McGeheeAttractors}$.

\section{Attractors}\label{sec:Attractors}

One of the fundamental formations we look for in dynamical systems are attractors. These have long held definitions in systems with uniqueness in forward time: those defined by flows, semi-flows, and maps (both invertible and not). In \ref{bk:McGeheeAttractors}, the remaining definitions were established.

\begin{defn}
A set $S\subset X$ is called {\it invariant} under the relation $f$ if $f(S)=S$.
\end{defn}

\begin{defn}
A set $A\subset X$ is called an {\it attractor} for relation $f\subset X\times X$ if 
\begin{enumerate}
	\item $f(A)=A$ (so $A$ is invariant), and
	\item there exists a neighborhood $U$ of $A$ such that \[\om(U)=A.\]
\end{enumerate}
\end{defn}

The omega limit set is defined as below.

\begin{defn}
If $f$ is a relation over $X$ and $S\subset X$, then the {\it omega limit set} of $S$ under $f$ is
	\[ \om(S;f)\equiv \bigcap \frak{K}(S;f),\]
where 
	\begin{align*} \frak{K}(S;f)\equiv& \{ K: K \hbox{ is a closed confining set satisfying}\\
							& f^n(S)\subset K \hbox{ for some }n\ge0\}.\end{align*}
We may abbreviate $\konf(S)$, $\konf_f$ or even $\konf$ when the $S$ or $f$ is understood from context. Likewise, $\om(S;f)$ may be abbreviated to $\om(S)$ if $f$ is clear from context.
\end{defn}

For an explanation of why this differs from the usual omega limit set for maps, as well as when these definitions agree, see \ref{bk:McGeheeAttractors}. 

Attractors are useful objects to find, but what happens when the underlying system changes, even slightly?

\begin{ex}\label{ex:Attractor}
Let $X=[-5,5]$ and consider the system defined by $f(x)=\frac{x-1}{2}$. That is, the relation is $f\subset X\times X = \{(x,y):y=(x-1)/2\}$. This system has one attractor (the only equilibrium) at $x=-1$. Let's change this only slightly: $g(x)=\frac{x-1.1}{2}$, with an attractor at $x=-1.1$. These attractors are clearly linked, but they don't share a location. If we were looking for attractors in the family of systems \[\mathfrak{F} = \left\{ f_\alpha \subset X\times X: (x,y)\in f_\alpha \iff y=\frac{x+\alpha}{2} \hbox{ for all }\alpha\in\RR\right\},\] we would need to look at new locations.
They are not robust to parameter changes. 
\end{ex}

So, we find a set that is linked to attractors, but which is robust to parameter changes (or better yet, which continues).
For maps and flows, C. Conley proposed the use of {\it attractor blocks} \ref{bk:Conley},\ref{bk:Mischaikow}, and R. McGehee extended this notion to relations \ref{bk:McGeheeAttractors}. Attractor blocks are often robust to parameter changes.

\begin{defn}
Given topological space $X$ and relation $f\subset X\times X$, $B\subset X$ is an attractor block if
			\[ f(\ol{B})\subset \Int (B).\]
\end{defn}

Another definition will be useful in Section \ref{sec:Semicontinuity}, so we include it here.

\begin{thm}[Lemma 7.7 from \ref{bk:McGeheeAttractors}]\label{thm:ABAltDefn}
A set $B\subset X$ is an attractor block for relation $f$ if and only if $f\cap\left(\ol{B}\times\ol{B^C}\right) = \emptyset.$
\end{thm}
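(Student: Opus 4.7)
The plan is to prove the equivalence by directly unwrapping the definitions, using the elementary point-set identity that in any topological space, the complement of the interior equals the closure of the complement: $\bigl(\Int(B)\bigr)^c = \overline{B^c}$.

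First I would translate the inclusion $f(\overline{B}) \subset \Int(B)$ into a statement about pairs in $f$. By definition of image, $f(\overline{B}) \subset \Int(B)$ is equivalent to saying: for every $(x,y) \in f$ with $x \in \overline{B}$, we have $y \in \Int(B)$. Contrapositively, this is the statement that there is no pair $(x,y) \in f$ with $x \in \overline{B}$ and $y \notin \Int(B)$.

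Next I would apply the identity $\bigl(\Int(B)\bigr)^c = \overline{B^c}$ to rewrite ``$y \notin \Int(B)$'' as ``$y \in \overline{B^c}$.'' Combining, the attractor block condition becomes: there is no $(x,y) \in f$ with $(x,y) \in \overline{B} \times \overline{B^c}$, which is precisely $f \cap (\overline{B} \times \overline{B^c}) = \emptyset$. Since every step is an ``if and only if,'' both directions of the theorem follow simultaneously.

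There is essentially no obstacle here; the only ingredient beyond the definitions is the complement-interior-closure identity, which holds in any topological space and hence in our compact Hausdorff setting. Because the argument is purely set-theoretic, no appeal to compactness, Hausdorffness, or closedness of $f$ is needed for the equivalence itself — those hypotheses enter elsewhere in the paper when attractor blocks are used to produce attractors.
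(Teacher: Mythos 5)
Your proof is correct and follows essentially the same route as the paper: both arguments unwrap the definition of the image, apply the identity $\bigl(\Int(B)\bigr)^c = \ol{B^c}$, and translate the resulting emptiness condition into a statement about pairs in $f\cap\left(\ol{B}\times\ol{B^C}\right)$. Your added observation that no compactness, Hausdorffness, or closedness is needed for this purely set-theoretic equivalence is accurate and consistent with the paper, which states the lemma for an arbitrary relation.
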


\begin{proof}
Let $f \subset X\times X$ be a relation. Then
	\begin{align*}
		f \hbox{ is an attractor block} \iff& f(\ol{B})\subset \Int(B) \\
							\iff& f(\ol{B})\cap \ol{B^C} = \es \\
							\iff& f\cap (\ol{B}\times\ol{B^C}) = \es.
	\end{align*}
\end{proof}


Attractor blocks are useful but require translation. As was done for flows (see \ref{bk:Conley}, \ref{bk:Mischaikow}), one needs to connect attractors to attractor blocks, and vice versa. Some assumptions are necessary, as you'll see in Theorems \ref{thm:BImpA} and \ref{thm:AandNImpB}. Given an attractor block $B$, we can guarantee an attractor inside $A=\om(B)\subset B$ (see Figure \ref{fig:BImpA}). Such an attractor is said to be {\it the attractor associated to $B$}. For the other direction, we require an attractor $A$ and a bounding neighborhood; given those, we can guarantee the existence of an attractor block inside the bounding neighborhood, which contains said attractor in its interior. Then $B$ is {\it an attractor block associated to $A$}. 


\begin{figure}
\centering
\subfigure[An attractor block $B$ is guaranteed an attractor $A$ associated to $B$.]
{
	\includegraphics[width=.4\textwidth]{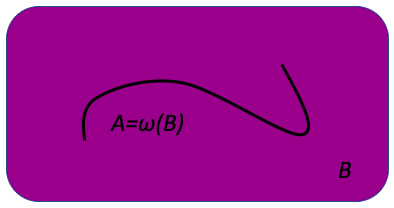}\label{fig:BImpA}
}
\subfigure[Given any neighborhood $U$ of an attractor $A$, we are guaranteed an attractor block $B\subset U$ associated to $A$.]
{
	\includegraphics[width=.4\textwidth]{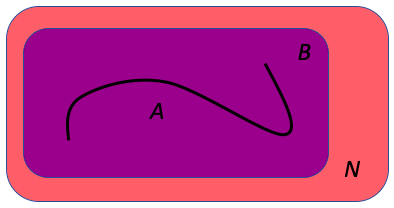}\label{fig:AandNImpB}
}
\end{figure}

\begin{thm}[Theorem 7.2 from \ref{bk:McGeheeAttractors}]\label{thm:BImpA}
If $f$ is a closed relation on a compact Hausdorff space and if $B$ is an attractor block for $f$, then $B$ is a neighborhood of $\om(B)$ and, hence, $\om(B)$ is an attractor for $f$.
\end{thm}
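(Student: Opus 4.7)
The strategy is to pinch $\omega(B)$ between $f(\overline{B})$ and $\text{int}(B)$ so that $B$ becomes a neighborhood of $\omega(B)$, and then verify the two attractor axioms with the choice $U := \text{int}(B)$.

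First I would establish that $f(\overline{B}) \in \mathfrak{K}(B;f)$. Closedness of $f(\overline{B})$ is a quick compactness argument: since $X$ is compact Hausdorff, $\overline{B}\times X$ is compact, so $f \cap (\overline{B}\times X)$ is closed in a compact space and hence compact, and its projection onto the second factor is exactly $f(\overline{B})$, which is therefore compact (and closed). The attractor block inclusion $f(\overline{B}) \subset \text{int}(B) \subset \overline{B}$ together with monotonicity of $f$ then yields $f(f(\overline{B})) \subset f(\overline{B})$, so $f(\overline{B})$ is confining. Since also $f^1(B) \subset f(\overline{B})$, the definition of $\omega$ as an intersection over $\mathfrak{K}(B;f)$ gives
\[
\omega(B) \;\subset\; f(\overline{B}) \;\subset\; \text{int}(B),
\]
which is exactly the statement that $B$ is a neighborhood of $\omega(B)$.

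Next, setting $U := \text{int}(B)$, I would show $\omega(U) = \omega(B)$. Monotonicity of $\omega$ gives $\omega(U) \subset \omega(B)$. Conversely, any $K \in \mathfrak{K}(U;f)$ with $f^n(U) \subset K$ satisfies $f^{n+1}(B) = f^n(f(B)) \subset f^n(U) \subset K$ (using $f(B) \subset f(\overline{B}) \subset U$), so $K \in \mathfrak{K}(B;f)$ and hence $\omega(B) \subset K$; intersecting over all such $K$ yields $\omega(B) \subset \omega(U)$. The one remaining axiom is invariance, $f(\omega(B)) = \omega(B)$. The forward inclusion is immediate: for each $K \in \mathfrak{K}(B;f)$, confinement gives $f(\omega(B)) \subset f(K) \subset K$, and intersecting yields $f(\omega(B)) \subset \omega(B)$. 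The reverse inclusion is the main obstacle, since it requires producing, for each $y \in \omega(B)$, a predecessor that lies in $\omega(B)$ itself; I would handle this either by invoking the invariance statement for $\omega$-limits of closed relations on compact Hausdorff spaces from McGehee's paper, or by a direct net argument using closedness of $f$ and compactness of $\overline{B}$ to extract a convergent predecessor whose limit lies in $\omega(B)$.
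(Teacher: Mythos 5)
The paper never proves this statement: it is imported verbatim as Theorem 7.2 of McGehee's paper and used as a black box, so there is no in-paper argument to measure yours against. Judged on its own, your proposal handles the first two claims completely and correctly. Showing $f(\ol{B})$ is compact (hence closed), confining, and contains $f^1(B)$ places it in $\konf(B;f)$, giving $\om(B)\subset f(\ol{B})\subset \Int(B)$, which is the neighborhood claim; and your two-sided comparison of $\konf(\Int(B);f)$ with $\konf(B;f)$, using $f(B)\subset f(\ol{B})\subset \Int(B)$ and associativity of composition, correctly yields $\om(\Int(B))=\om(B)$, which is the second attractor axiom with $U=\Int(B)$.

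The genuine gap is exactly where you locate it: $\om(B)\subset f(\om(B))$. Citing ``the invariance statement from McGehee's paper'' is delicate, since the statement you are proving \emph{is} a theorem of that paper; you would need to point to an independent, earlier result there rather than risk circularity. The net argument, as sketched, is missing one ingredient: you never identify $\om(B)$ with anything on which compactness can be run. The missing observation is that the attractor block property gives $f^{m+1}(\ol{B})=f^m(f(\ol{B}))\subset f^m(\Int(B))\subset f^m(B)$; hence each $f^n(\ol{B})$ with $n\ge 1$ lies in $\konf(B;f)$, while every $K\in\konf(B;f)$ containing some $f^m(B)$ contains $f^{m+1}(\ol{B})\supset\bigcap_{n\ge1}f^n(\ol{B})$, so that $\om(B)=\bigcap_{n\ge1}f^n(\ol{B})$, a nested intersection of compact sets. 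With that in hand, for $y\in\om(B)$ the predecessor sets $f^*(y)\cap f^n(\ol{B})$ are compact, nested, and nonempty (because $y\in f^{n+1}(\ol{B})=f(f^n(\ol{B}))$), so the finite intersection property produces $x\in f^*(y)\cap\bigcap_{n\ge1}f^n(\ol{B})=f^*(y)\cap\om(B)$, i.e.\ $y\in f(\om(B))$. Without this identification, ``extract a convergent predecessor whose limit lies in $\om(B)$'' does not obviously close up: a limit of predecessors taken from $\ol{B}$ is only known to lie in $\ol{B}$, not in $\om(B)$.
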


\begin{thm}[Theorem 7.3 from \ref{bk:McGeheeAttractors}]\label{thm:AandNImpB}
If $f$ is a closed relation on a compact Hausdorff space, if $A$ is an attractor for $f$, and if $N$ is a neighborhood of $A$, then there exists a closed attractor block $B$ for $f$ such that $B\subset N$ and $\om(B)=A$.
\end{thm}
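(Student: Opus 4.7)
My plan has three stages: reduce to an attracting neighborhood that fits inside $N$ using normality; extract nested closed confining sets from $\konf(U;f)$ via a compactness argument; and then promote one of these confining sets to an honest attractor block.

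\textbf{Stage 1: Shrinking $U$ and extracting nested elements of $\konf$.} Since $X$ is compact Hausdorff, hence normal, and $A$ is compact (closed as an intersection of closed sets in the $\om$-definition) with $A\subset U\cap \Int(N)$, I pick an open $U'$ satisfying $A\subset U'\subset \ol{U'}\subset U\cap \Int(N)$. Monotonicity of $\om$ together with $\om(A)=A$ (from $f(A)=A$) and $A\subset U'\subset U$ yields $\om(U')=A$. The collection $\konf(U';f)$ is closed under finite intersection: if $K_i$ is closed confining with $f^{n_i}(U')\subset K_i$, then $K_1\cap K_2$ is closed, $f(K_1\cap K_2)\subset f(K_1)\cap f(K_2)\subset K_1\cap K_2$, and $f^{\max(n_1,n_2)}(U')\subset K_1\cap K_2$. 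Combined with $\bigcap\konf(U';f)=\om(U')=A$, a standard finite-intersection-property argument in compact $X$ shows that for every open neighborhood $W\supset A$, some element of $\konf(U';f)$ lies inside $W$. Applying this repeatedly produces nested closed confining sets $A\subset K_3\subset \Int(K_2)\subset K_2\subset \Int(K_1)\subset K_1\subset U'$, all in $\konf(U';f)$.

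\textbf{Stage 2: Promoting confinement to an attractor block (the main obstacle).} A member of $\konf$ only satisfies $f(K)\subset K$, while an attractor block requires the strictly stronger $f(\ol{B})\subset \Int(B)$. My candidate is
\[
B:=K_1\cap\bigl\{\,x\in X:f(x)\subset K_2\,\bigr\}.
\]
This set is closed because a closed relation on compact Hausdorff $X$ is upper semicontinuous; concretely, the complement $\{x:f(x)\cap K_2^c\ne\es\}$ equals $\pi_1(f\cap(X\times K_2^c))$, which is open since $\pi_1:X\times X\to X$ is an open map and $K_2^c$ is open. Invariance gives $A\subset B$ (since $f(A)=A\subset K_2$ and $A\subset K_1$), and $B\subset K_1\subset N$. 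To verify the attractor-block condition I invoke Theorem \ref{thm:ABAltDefn}, showing $f\cap(\ol{B}\times\ol{B^c})=\es$: any offending $(x,y)\in f$ with $x\in B$ would have $y\in f(x)\subset K_2\subset \Int(K_1)$, and the inner confining set $K_3\subset \Int(K_2)$ combined with upper semicontinuity of $f$ at $y$ (applied to $f(y)\subset K_3\subset \Int(K_2)$) yields a neighborhood of $y$ on which $f$ maps into $K_2$, placing $y$ in $\Int(B)$ and contradicting $y\in\ol{B^c}$. Arranging the nested confining levels so that this upper-semicontinuity step closes up without an off-by-one is the principal technical challenge; a slightly more elaborate candidate (replacing $K_2$ in the definition of $B$ by an even smaller $K_n$) may be required.

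\textbf{Stage 3: Identifying $\om(B)$.} Finally, $B\subset U'\subset U$ together with monotonicity of $\om$ gives $\om(B)\subset \om(U')=A$; conversely $A\subset B$ and $\om(A)=A$ give $A\subset \om(B)$. Hence $\om(B)=A$, and $B$ is the desired closed attractor block contained in $N$ with associated attractor $A$.
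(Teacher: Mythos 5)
The paper does not actually prove this statement---it imports it verbatim as Theorem 7.3 of McGehee's paper---so your attempt must stand on its own, and it does not yet: there are genuine gaps at the crux. In Stage 1, the finite-intersection-property argument is correct as far as it goes (it produces, for each open $W\supset A$, some $K\in\konf(U';f)$ with $K\subset W$, and the monotonicity facts and $\om(U')=A$ are fine), but ``applying this repeatedly'' to get $K_2\subset\Int(K_1)$ requires $\Int(K_1)$ to be an open neighborhood of $A$, and nothing guarantees $A\subset\Int(K_1)$: members of $\konf$ need not be neighborhoods of the attractor and can even have empty interior (for the constant relation $f=X\times\{0\}$ on $X=[-1,1]$, the set $K=\{0\}$ is a legitimate closed confining set with entry time $1$). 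Being a neighborhood is exactly what Theorem \ref{thm:BImpA} supplies for attractor \emph{blocks}, not for confining sets, so the nested-with-interiors chain silently assumes a piece of what is to be proved.

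Stage 2 then fails twice. The closedness argument for $B=K_1\cap\{x:f(x)\subset K_2\}$ is invalid: an open map carries open sets to open sets, but $f\cap(X\times K_2^c)$ is the intersection of a closed set with an open set and is not open, so its projection need not be open. The correct upper-semicontinuity statement makes $\{x:f(x)\subset G\}$ open for \emph{open} $G$; for closed $K_2$ the set $\{x:f(x)\subset K_2\}$ is in general neither open nor closed (one can build a closed relation on an interval, with all your Stage-1 sets $K_3\subset\Int(K_2)\subset K_2\subset\Int(K_1)$ realized in $\konf$, for which this $B$ is visibly not closed). More fundamentally, the key step is a non sequitur: from $y\in f(x)\subset K_2$ and $K_2$ confining you obtain only $f(y)\subset K_2$, never $f(y)\subset K_3$---membership in a deeper confining set is not inherited by images---so upper semicontinuity cannot be invoked with target $\Int(K_2)$, and replacing $K_2$ by any smaller $K_n$ reproduces the identical problem one level down. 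You flag this yourself as the ``principal technical challenge,'' but it is not an off-by-one that can be arranged away: converting the non-strict confinement $f(K)\subset K$ into the strict collar $f(\ol{B})\subset\Int(B)$ is the entire content of the theorem, and intersecting members of $\konf$ cannot manufacture it. McGehee's actual argument builds the collar by a different mechanism, in the spirit of Conley's attractor-block constructions (compactness yields a uniform entry time of a compact neighborhood into a prescribed open set, and the block is assembled from finitely many images), and some such genuinely new ingredient is required where your Stage 2 currently stands. (Stage 3, identifying $\om(B)=A$ by monotonicity once $A\subset B\subset U'$ is secured, is fine.)
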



\begin{rmk}
Given an attractor $A$ and a bounding neighborhood $N$, we are able to acquire an attractor block associated to $A$ and contained in $N$, but there is no guarantee of uniqueness. At the end of Example \ref{ex:AttractorBlock}, we'll see this. 
\end{rmk}

With Theorems \ref{thm:BImpA} and \ref{thm:AandNImpB}, we know that we can translate in both directions, making attractor blocks useful tools for understanding attractors. 

\begin{ex}\label{ex:AttractorBlock}
We revisit the relation family from Example \ref{ex:Attractor}:
	\[\mathfrak{F} = \left\{ f_\alpha \subset X\times X: (x,y)\in f_\alpha \iff y=\frac{x+\alpha}{2} \hbox{ for all }\alpha\in\RR\right\},\]
where $X=[-5,5]$. We made $X$ compact ($\RR$ is already Hausdorff) so that Theorems \ref{thm:BImpA} and \ref{thm:AandNImpB} apply. Recall that we started with $f_{-1}=\{(x,y): y=\frac{x-1}{2}\}$, which has an attractor at $A_{-1} = \{-1\}\subset X$. Let $N$ be the $\ep$-ball around $A_{-1}$ where $\ep=0.3$, and choose $B = \ol{N}_{0.2}(A_{-1}) = [-1.2, -0.8]\subset \Int(N)$. This is an attractor block because 
	\[f(\ol{B})=f(B) = [-1.1,-0.9]\subset \Int(B).\]
Then $$\mathfrak{F}_B = \left\{ f_\alpha \subset X\times X: (x,y)\in f_\alpha \iff y=\frac{x+\alpha}{2}\hbox{ for } \alpha\in (-1.2,-0.2)\right\}=\mathfrak{F}\mid_{\alpha\in(-1.2,-0.2)}$$
represents a sub-family of relations $f_\alpha$ with attractors $A_\alpha=\{\alpha\}=\om(B)\subset B$. Therefore, these attractors are also associated (when paired with the correct relation) to $B$.

There are more relations in, which share $B$ as an attractor block. These relations would have an attractor ``close" to $A_{-1}$ (because they're within $B$, which is in turn in $N$). In Section \ref{sec:Semicontinuity} we will define some criteria for finding more such relations.

Furthermore, $A_{-1}$ was the attractor, and $N$ the given neighborhood, but we had many choices for $B$. In this example it was easy to find an attractor block. Any subset $V\subset X$, which satisfied $V\subset\Int(N)$ and $A_{-1}\subset\Int(V)$ would be an attractor block for $f_{-1}$ associated to $A_{-1}$. The attractor blocks are not always so simple to find (especially in higher dimensions), and they are not necessarily unique.
\end{ex}

\section{Semicontinuity of attractors for relations}\label{sec:Semicontinuity}

Because we rely on Theorems \ref{thm:BImpA} and \ref{thm:AandNImpB}, we need $X$ to be compact and Hausdorff. In this setting, however, attractors for closed relations are semicontinuous. This gives us a kind of breathing room. If a relation $f$ is used as a model, and we know it has a non-empty attractor in a given region, then even if we need to adjust our relation (within reason) to a nearby relation $g$, then $g$ also has an attractor near where we expect one.

\begin{thm}[Attractors over compact Hausdorff spaces are semicontinuous]\label{thm:AttSemiCont}
Let $X$ be a compact Hausdorff space, and let $f\cl{\subset} X\times X$ be a closed relation over $X$. Suppose also that $A\subset X$ is a nonempty attractor for $f$. Given any open neighborhood $U$ of $A$, there is an open neighborhood $V$ of $f$ such that any other closed relation $g\cl{\subset} V$ has an attractor $A'\subset U$. \end{thm}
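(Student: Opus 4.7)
The plan is to route everything through attractor blocks and then to use a topological separation argument in $X\times X$. Since $A$ is an attractor for $f$ and $U$ is an open neighborhood of $A$, Theorem \ref{thm:AandNImpB} furnishes a closed attractor block $B$ for $f$ satisfying $B\subset U$ and $\om(B;f)=A$. My aim will be to build an open neighborhood $V$ of $f$ in $X\times X$ with the property that the \emph{same} $B$ remains an attractor block for every closed relation $g\subset V$. Once that is done, Theorem \ref{thm:BImpA} applied to $g$ and $B$ immediately produces an attractor $A'=\om(B;g)$ for $g$ with $A'\subset B\subset U$, which is precisely what we want.

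The key observation is that the attractor-block condition, in the form supplied by Theorem \ref{thm:ABAltDefn}, is a \emph{disjointness} statement about closed subsets of $X\times X$: $B$ is an attractor block for a relation $h$ if and only if $h\cap C=\es$, where $C:=\ol{B}\times\ol{B^C}$. Thus I only need $V$ to be an open neighborhood of $f$ in $X\times X$ that is disjoint from $C$. This is where compactness and the Hausdorff hypothesis do their work: $X\times X$ is compact Hausdorff and therefore normal, while $f$ and $C$ are both closed and, by the choice of $B$, disjoint. Applying normality (equivalently, disjoint closed compact sets in a Hausdorff space have disjoint open neighborhoods), I obtain disjoint open sets $V\supset f$ and $W\supset C$ in $X\times X$; this $V$ is the neighborhood promised by the theorem.

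To close the argument, let $g\cl{\subset}V$ be any closed relation. Since $g\subset V$ and $V\cap C=\es$, we have $g\cap(\ol{B}\times\ol{B^C})=\es$, so Theorem \ref{thm:ABAltDefn} (in the reverse direction) shows that $B$ is an attractor block for $g$. Because $g$ is closed and $X$ is compact Hausdorff, Theorem \ref{thm:BImpA} applies and produces an attractor $A':=\om(B;g)$ for $g$ contained in $B\subset U$.

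The only real obstacle is conceptual rather than technical: one must recognize that Theorem \ref{thm:ABAltDefn} converts the attractor-block condition into something stable under ``$V$-neighborhood'' perturbations of the relation. Once that reformulation is in hand, the argument is essentially a single application of normality of $X\times X$ sandwiched between the two bridge theorems \ref{thm:AandNImpB} and \ref{thm:BImpA}. No metric or countability hypothesis is needed, which is exactly why the statement goes through in arbitrary compact Hausdorff $X$.
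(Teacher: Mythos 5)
Your proposal is correct and follows essentially the same route as the paper's proof: an attractor block $B\subset U$ from Theorem \ref{thm:AandNImpB}, the disjointness reformulation of Theorem \ref{thm:ABAltDefn}, and then Theorem \ref{thm:BImpA} applied to $g$. The only difference is that the paper skips the normality argument entirely by taking $V=\left(\ol{B}\times\ol{B^C}\right)^C$, which is already an open neighborhood of $f$ disjoint from $\ol{B}\times\ol{B^C}$, so your separation step, while valid, is not needed.
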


\begin{proof}
Assume $X$, $f$, and $A$ are as described. Let $U$ be any open neighborhood of $A$. By Theorem \ref{thm:AandNImpB}, there is a closed attractor block $B\subset U$, associated to $A$ (meaning $\om(B;f)=A$). Let $V=(\ol{B}\times\ol{B^C})^C$. Then $V$ is open, and for any $g\cl{\subset} V$,
	\[       g\cap (\ol{B}\times\ol{B^C}) \subset V \cap (\ol{B}\times\ol{B^C}) = \es.\]
By Theorem \ref{thm:ABAltDefn}, this means $B$ is also an attractor block for the relation $g$. We are thus guaranteed that $A'=\om(B;g)\subset B\subset U$ is an attractor for $g$ (Theorem \ref{thm:BImpA}).
\end{proof}

\begin{rmk}
Due to the choice of $B$, $V$ is not necessarily unique (see Example \ref{ex:AttractorBlock}). Also, there is no guarantee that $A'=\om(B;g)$ is non-empty (see Example \ref{ex:EmptyAttractor}).
\end{rmk}

\begin{ex}\label{ex:EmptyAttractor}
Let $X$ be any compact Hausdorff space, let $f\cl{\subset}X\times X$ be a closed relation with attractor $A$, and let $B$ a choice of attractor block associated to $A$. Then, $V=(\ol{B}\times\ol{B^C})^C$. We choose the simplest closed relation $g=\es\cl{\subset}X\times X$. Then, $g\subset V$, so $B$ is also an attractor block for $g$. In this case, $\om(B;g) = \es\subset B$, which is an attractor associated to $B$.
\end{ex}

This result was originally formulated in compact metric spaces. It is worth considering some implications in this more concrete setting.

\begin{cor}\label{cor:RelnBubbleAttBlkMet}
Let $f\overset{cl}\subset X\times X$ be a closed relation with $B$ an attractor block for $f$. Let $\delta = d\left( f, \ol{B}\times\ol{B^c} \right)>0$. Then any closed relation $g\cl{\subset}\Int(N_\delta(f)) $
has an attractor $A'=\om(B;g)\subset B$.
\end{cor}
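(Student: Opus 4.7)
The plan is to recognize this corollary as a direct metric-space specialization of Theorem \ref{thm:AttSemiCont}. In the proof of that theorem, the open neighborhood of $f$ used to transport the attractor block property to nearby relations was $V = (\ol{B}\times\ol{B^C})^C$. Here I would show that $\Int(N_\delta(f))\subset (\ol{B}\times\ol{B^C})^C$, which reduces the corollary to the content of the theorem.

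First, I would unpack why $\delta>0$ is not an added assumption. Since $B$ is an attractor block for $f$, Theorem \ref{thm:ABAltDefn} tells us $f\cap(\ol{B}\times\ol{B^C})=\es$. Both $f$ and $\ol{B}\times\ol{B^C}$ are closed subsets of the compact metric space $X\times X$, so their distance is strictly positive; this is precisely the $\delta$ named in the statement.

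Second, I would verify that $\Int(N_\delta(f))$ avoids $\ol{B}\times\ol{B^C}$. By the definition of $\delta$, every point of $\ol{B}\times\ol{B^C}$ is at distance at least $\delta$ from $f$, so no such point can lie in the open $\delta$-neighborhood of $f$. Thus $\Int(N_\delta(f))\cap(\ol{B}\times\ol{B^C})=\es$. For any closed relation $g\cl\subset\Int(N_\delta(f))$ we immediately get $g\cap(\ol{B}\times\ol{B^C})=\es$, so Theorem \ref{thm:ABAltDefn} upgrades $B$ to an attractor block for $g$, and Theorem \ref{thm:BImpA} delivers $A'=\om(B;g)\subset B$ as an attractor for $g$.

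There is essentially no obstacle; the only mild care needed is bookkeeping around $N_\delta(f)$ (confirming it is open, so $\Int(N_\delta(f))=N_\delta(f)$) and noting, as in Example \ref{ex:EmptyAttractor}, that the conclusion does not claim $A'$ is nonempty. The corollary is a transparent repackaging of semicontinuity that trades the abstract neighborhood $V$ of $f$ for an explicit, quantitative one described by the metric.
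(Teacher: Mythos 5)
Your proposal is correct and follows essentially the same route as the paper: the paper's proof simply observes that $g\subset(\ol{B}\times\ol{B^C})^C=V$ and invokes Theorem \ref{thm:AttSemiCont}, while you unwind that theorem into its two ingredients (Theorems \ref{thm:ABAltDefn} and \ref{thm:BImpA}) and add the helpful but routine checks that $\delta>0$ and that $\Int(N_\delta(f))$ misses $\ol{B}\times\ol{B^C}$.
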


\begin{proof}
Let everything be as in the hypothesis. Then $g\subset (\ol{B}\times\ol{B^C})^C = V$. By Theorem \ref{thm:AttSemiCont}, we're done.
\end{proof}

The following example is elucidating, in that it demonstrates why one needs to consider neighborhoods (distance, in the metric case) in $X\times X$, rather than in $X$.

\begin{ex}\label{Ex:CE1}
Let $X=[0,3].$ Consider $$f=\left([0.8,2+\alpha]\times\{1.5\}\right) \cup \left(\{2+\alpha\}\times[1.5,3] \right).$$ Then $B=[1,2]$ is an attractor block: $$\ol{B}\times\ol{B^c} = [1,2]\times\left([0,1]\cup[2,3]\right),$$ which has no overlap with $f$. It is tempting to consider only the image $f(B)$ and its distance from $B^C$. However, $d\left(\ol{f(B)},\ol{B^c}\right)=0.5=\delta$, while $d\left( f, \ol{B}\times\ol{B^c}\right) = \min\{0.5, \alpha\}$. See Figure \ref{Fig:CE1} for an illustration in which $\alpha = 0.1.$ In order to speak of a neighborhood of $f$, we need to take $\alpha$ into account.

Note also that $\alpha$ can be arbitrarily small without changing the nature of $B$, our attractor block. So, let $\alpha = 0.1<0.5.$ Let us take $\ep$ to be between them: $\alpha<\ep<\delta$. In this case, we can choose $\ep=0.15$. Set
	\[ g = \ol{N_{\ep}}(f)=\ol{N_{0.15}}(f).\]
Then, $g$ is within $0.5$ of $f$, but $g\cap (\ol{B}\times\ol{B^c}) \neq \es$, meaning $B$ fails to be an attractor block for $g$.

\begin{figure}\centering
\label{Fig:CE1}
  \includegraphics[width=3.3in]{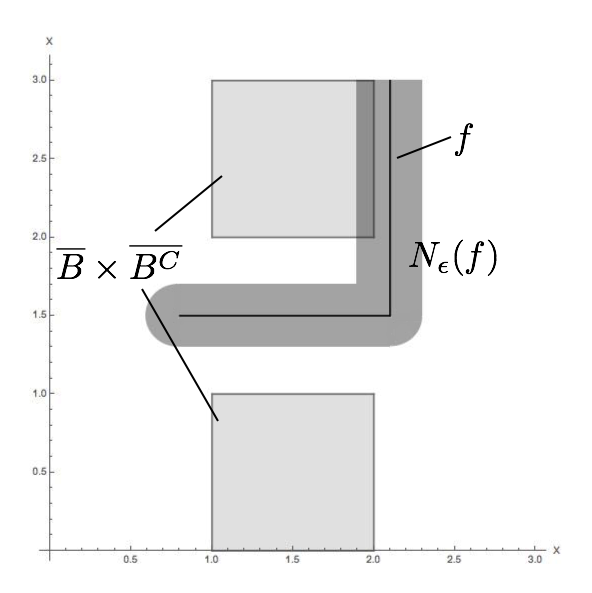}\label{fig:MapEx1f}
  \caption{Illustration of $f$, $N_\ep(f)$, and $\ol{B}\times\ol{B^c}$ from Example \ref{Ex:CE1}.}
\end{figure}

\FloatBarrier


\end{ex}

\section*{Acknowledgements}
The bulk of this research was completed for my PhD dissertation in the Department of Mathematics at the University of Minnesota. As such, I owe a great debt to my advisor, Dr. Richard McGehee, for his inspiration and guidance, as well as the idea for this problem, which is a continuation of his work in \ref{bk:McGeheeAttractors}. I am grateful for so many wonderful conversations with him. I was able to complete this work, and to generalize the result which occurs in my dissertation, thanks to a postdoctoral fellowship at the Institute for Mathematics and its Applications at the University of Minnesota and funding from Cargill, Inc. 

\section*{References}
\label{sec:References}
\begin{enumerate}[label={[\arabic*]},itemsep=0mm]
\item C. Conley, {\it Isolated Invariant Sets and the Morse Index}, Reg. Conf. in Math. {\bf 38} CBMS (1978).\label{bk:Conley}
\item R. McGehee, {\it Attractors for Closed Relations on Compact Hausdorff Spaces}, IN U. Math. Journal {\bf 41} {\it 4} (1992).\label{bk:McGeheeAttractors}
\item R. McGehee, personal communication (2015-2016).\label{pc:McGehee}
\item R. McGehee, E. Sander, {\it A New Proof of the Stable Manifold Thm}, ZAMP {\bf 74} {\it 4} (1996), pp. 497-513. \label{bk:McGeheeSander}
\item R. McGehee, T. Wiandt, {\it Conley Decomposition for Closed Relations}, preprint (2005).\label{bk:McGeheeWiandt}
\item K. Mischaikow, {\it The Conley Theory: A Brief Introduction}, Center for Dyn. Sys. and Nonlinear Studies, Georgia Inst. Tech., Atlanta, GA, Banach Center Publications, Vol **, Inst. of Math., Polish Acad. of Sci. Warszawa {\bf 199*} (1991).\label{bk:Mischaikow}
\item S. Negaard-Paper, {\it Attractors and Attracting Neighborhoods for Multiflows}, PhD Diss. (2019), University of Minnesota, Minneapolis, U.S., \label{bk:Me} {\tt arXiv:1905.06473 [math.FA]}.

\end{enumerate}

\bigskip

This research was supported in part by NSF grants DMS-0940366 and DMS-094036. 

\end{document}